\title[A smoothness criterion for complex spaces]{A smoothness criterion for complex spaces\\ in terms of differential forms}
\author{H{\aa}kan Samuelsson Kalm \& Martin Sera}
\address{H{\aa}kan Samuelsson Kalm, Martin Sera, Department of Mathematical Sciences, Division of Algebra and Geometry, University of Gothenburg and 
Chalmers University of Technology, SE-412 96 G\"{o}teborg, Sweden}
\email{hasam@chalmers.se}
\address{Martin Sera, Faculty of Engineering, Kyoto University of Advanced Science, Kyoto 615-8577, Japan}
\email{sera.martin@kuas.ac.jp}
\subjclass[2010]{32C35 (32C15)}
\date{\today}
\newtheorem{proposition}{Proposition}[section]
\newtheorem{theorem}[proposition]{Theorem}
\newtheorem{corollary}[proposition]{Corollary}
\theoremstyle{definition}
\numberwithin{equation}{section}
\DeclareMathOperator{\Hom}{\mathscr{H}\text{\kern -3pt {\calligra\Large om}}\,}
\DeclareMathOperator{\Ext}{\mathscr{E}\text{\kern -3pt {\calligra\Large xt}}\,\,}
\DeclareMathOperator{\Image}{\mathscr{I}\text{\kern -3pt {\calligra\Large m}}\,}
\DeclareMathOperator{\Ker}{\mathscr{K}\text{\kern -3pt {\calligra\Large er}}\,}
\newcommand{\CC}{\mathbb{C}}
\newcommand{\debar}{\bar{\partial}}
\newcommand{\PM}{\mathscr{P} \kern -3pt \mathscr{M}}
\newcommand{\hol}{\mathscr{O}}
\newcommand{\CH}{\mathscr{C} \kern -2pt \mathscr{H}}
\newcommand{\Om}{\mathit{\Omega}} 
\newcommand{\tOm}{{\tilde{\Om\,}\!}{}}
\def\newop#1{\expandafter\def\csname #1\endcsname{\mathop{\rm #1}\nolimits}}
   \newcommand{\sE}{\mathscr{E}} \newcommand{\sF}{\mathscr{F}}       \newcommand{\sO}{\mathscr{O}}   \newcommand{\sS}{\mathscr{S}} \newcommand{\sT}{\mathscr{T}}   
\newcommand{\aus}{\subset} \newcommand{\minus}{\setminus} \newcommand{\iso}{\simeq}
\newcommand{\ph}{\varphi}
\newcommand{\reg}{{\mathrm{reg}}}\newcommand{\sing}{{\mathrm{sing}}}\newcommand{\tor}{{\mathrm{torsion}}}
\DeclareMathOperator{\codim}{codim}
\newcommand{\cf}{cf.\ } \newcommand{\eg}{e.\,g.,\ }   \newcommand{\ie}{i.\,e., }  
\newcommand{\spfrac}[2]{#1/#2}
\def\section{\@startsection{section}{1}
  \z@{2\linespacing\@plus1.5\linespacing}{\linespacing\@plus .5\linespacing}
  {\normalfont\bf\centering}}
\newcommand{\stdpt}[1]{\makebox[1.75em]{(#1)}}
\begin{document}
\nocite{*}
\bibliographystyle{plain}

\begin{abstract}
For a reduced pure dimensional complex space $X$, we show that if 
Barlet's recently introduced sheaf $\alpha_X^1$ of holomorphic $1$-forms or
the sheaf of germs of weakly holomorphic $1$-forms is locally free, then $X$ is smooth.
Moreover, we discuss the connection to Barlet's well-known sheaf $\omega_X^1$.
\end{abstract}

\thanks{The last author was supported by the German Research Foundation\\ (DFG, grant SE 2677/1) and the Knut and Alice Wallenberg Foundation.}

\maketitle

\onehalfspacing
\thispagestyle{empty} 

\section{Introduction}
Let $X$ be a reduced pure dimensional complex space.
In presence of singularities, there exist several natural concepts of holomorphic differential forms on $X$.
Beside of the well-known sheaf $\Om_X^1$ of K\"ahler differentials, the sheaf $\Om_X^1/\tor$ of strongly holomorphic forms, and the so-called Barlet sheaf $\omega_X^1$,
we will consider the following two in this note.
We say that a holomorphic $1$-form $\varphi$ on $X_{\reg}$ is \emph{weakly holomorphic} if 
there is a resolution of singularities
$\pi\colon\tilde{X}\to X$, with $\tilde{X}$ smooth, such that $\pi^*\varphi$ extends holomorphically across the exceptional set. 
By \cite[Section II]{Griffiths},
$\varphi$ is weakly holomorphic if and only if for any holomorphic curve $C\subset X$ not contained in $X_{\sing}$,
$\varphi\wedge\bar{\varphi}$ is integrable on the regular part of any small enough open subset in $C$.
We will denote the sheaf of germs of weakly 
holomorphic $1$-forms by $\tOm_X^1$;
thus $\tOm_X^1=\pi_*\Om_{\tilde{X}}^1$.
In \cite[Section 3]{Barlet18}, Barlet introduces the coherent analytic subsheaf $\alpha_X^1$ of $\tOm_X^1$ which is defined as follows.
Using \cite[Theorem 3.5]{Rossi} (\cf also \cite[\S\,2]{Riemenschneider}, \cite[Prop.~2.1.1]{Barlet18}),
we may assume that $\pi$ is chosen such that $\pi^\ast\Om_X^1/\tor$ is locally free.
On a small enough open set,
let $\ph_1,...,\ph_N$ be holomorphic $1$-forms generating $\Om_{X}^1$.
Then, $\pi^{\ast\ast}\Om^1_X$ is the $\hol_{\tilde X}$-module sheaf on $\tilde X$ generated by $\pi^\ast\ph_1,...,\pi^\ast \ph_N$ modulo torsion, where $\pi^\ast \ph_k$ denotes the pullback as differential form.
$\alpha_X^1$ is defined as the direct image sheaf $\pi_\ast(\pi^{\ast\ast}\Om_X^1)$.
We have $\Om_X^1/\tor\subset \alpha_X^1\subset \tOm_X^1$ which are proper inclusions in general.
Furthermore,  there is a pullback functor for $\alpha_X^1$ (compatible with the pullback of strongly holomorphic forms) which cannot exist for weakly holomorphic forms.
The purpose of this note is to show 

\begin{theorem}\label{main}
\stdpt{i} $X$ is smooth if and only if $\tOm_X^1$ is a locally free $\hol_X$-module.\\
\stdpt{ii} $X$ is smooth if and only if $\alpha_X^1$ is a locally free $\hol_X$-module.
\end{theorem}

The only if-parts are of course trivial. Our contribution is essentially the observation that if $\tOm_X^1$ or $\alpha_X^1$ is locally free, then $X$ 
is normal and the tangent sheaf, $\mathscr{T}_X$, 
is locally free. That $X$ then is smooth follows by an argument of van Straten and Steenbrink, \cite[Section 1.6]{vSS}, elaborated by Greb, 
Kebekus, Kov\'{a}cs, and Peternell, \cite[Theorem 6.1]{GKKP} which we outline for completeness (see Proposition \ref{pro:general_GKKP}).
The (i)-part was already proven by Kersken with a different approach in \cite[Satz~3.1]{Kersken}.

\medskip

Obviously, Theorem~\ref{main} generalizes  the classical smoothness criterion: if the sheaf of K\"ahler differentials $\Om_X^1$ or the strongly holomorphic 1-forms $\Om_X^1/\tor$ is locally free, then $X$ is smooth.
It is a well-known problem whether there is a smoothness criterion in terms of the sheaf $\omega_X^1$ introduced by Barlet, \cite[Section 1]{Barlet}.
$\omega_X^1$ may be defined as the sheaf of $\debar$-closed currents  on $X$ modulo the $\debar$-closed currents with support in the singular set $X_\sing$, \cf \cite[Prop.~4]{Barlet}.
Sections of $\omega_X^1$ over $U\setminus A$, where $U\subset X$ is open and $A\subset U$ is an analytic set with $\text{codim}_U A\geq 2$, extend across $A$ and, moreover,
$\omega_X^1$ is always torsion free, see \cite[Section 1]{Barlet}. It follows that $\omega_X^1$ is reflexive if $X$ is normal, see \cite[Section 2]{Barlet}.
In view of Theorem~\ref{main}, a smoothness criterion in terms of $\omega^1_X$ follows for spaces $X$ such that $\tOm_X^1 \simeq \omega_X^1$.
In general, $\tOm_X^1 \subsetneq \omega_X^1$ and the quotient 
$\omega_X^1/\tOm_X^1$ is related to the $s^{(1)}$-invariant of isolated singularities, see \cite{Yau}. However, Flenner's main result in \cite{Flenner}
implies that $\tOm_X^1$ is reflexive if $X$ is normal and $\text{codim}_X X_{\sing} \geq 3$.
If $X$ has klt singularities, then $\tOm_X^1$ is reflexive as well, see \cite[Theorem 1.4]{GKKP}.
In both cases, we obtain that $\tOm_X^1 =\omega_X^1$
since on a normal space a reflexive sheaf is uniquely determined by its restriction to the regular part.
Moreover, Pinkham and Wahl have shown that this equality also holds if $X$ is a surface with rational singularities, see, see 
\cite[Appendix]{Pinkham}  (cf.\ also \cite[\S\- 4--5]{Wahl}).
We remark that this last result does not hold in general in positive characteristic. 
Inspired by Pinkham, \cite[Prop.~1, Appendix]{Pinkham}, we get a sufficient cohomological condition ensuring the equality
$\tOm_X^1 =\omega_X^1$, see Proposition \ref{pro:pinkham} below, and we conclude the following corollary of Theorem~\ref{main}.

\begin{corollary}\label{cor:pinkham}
Let $X$ be a reduced Stein space and $M\to X$ a resolution of singularities with exceptional divisor $E$ having normal crossings.
If $\omega_X^1$ is locally free and
$H_E^1(\Om_M^1(\log E)\otimes\sO(-E))=0$,
then $X$ is smooth.
\end{corollary}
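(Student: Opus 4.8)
The plan is to obtain the corollary as a direct consequence of Theorem~\ref{main}. Since $\omega_X^1$ is locally free by hypothesis, it suffices to prove that the canonical inclusion $\tOm_X^1 \hookrightarrow \omega_X^1$ is an isomorphism: then $\tOm_X^1$ is locally free and Theorem~\ref{main} gives immediately that $X$ is smooth. The equality $\tOm_X^1 = \omega_X^1$ under the cohomological hypothesis is exactly the assertion of Proposition~\ref{pro:pinkham}, so everything reduces to that proposition, whose proof I sketch below.

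Write $\pi\colon M\to X$ for the resolution and set $\F:=\Om_M^1(\log E)(-E)$. I would first record two elementary facts about $\F$ as a subsheaf of the meromorphic $1$-forms on $M$: one has $\F\subset\Om_M^1$, because a logarithmic form multiplied by a local section vanishing along $E$ is holomorphic, and $\F|_{M\setminus E}=\Om_M^1|_{M\setminus E}$, because the logarithmic twist is invisible off $E$. Since $\pi$ restricts to a biholomorphism $M\setminus E\xrightarrow{\sim}X_\reg$ and every section of $\omega_X^1$ is, by Barlet's definition, a $\debar$-closed meromorphic form that is holomorphic on $X_\reg$, torsion-freeness of $\omega_X^1$ identifies $\Gamma(X,\omega_X^1)$ with a subspace of $\Gamma(M\setminus E,\F)$. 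Combining this with $\tOm_X^1=\pi_*\Om_M^1$ yields, over the Stein base $X$, the chain of inclusions
\begin{equation*}
\Gamma(M,\F)\ \subseteq\ \Gamma\big(X,\tOm_X^1\big)\ \subseteq\ \Gamma\big(X,\omega_X^1\big)\ \subseteq\ \Gamma(M\setminus E,\F),
\end{equation*}
all realised as subspaces of the holomorphic $1$-forms on $X_\reg$.

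The decisive step is then the long exact sequence of local cohomology with support along $E$,
\begin{equation*}
H_E^0(\F)\ \to\ \Gamma(M,\F)\ \to\ \Gamma(M\setminus E,\F)\ \to\ H_E^1\big(\Om_M^1(\log E)(-E)\big).
\end{equation*}
As $\F$ is locally free and $E$ is nowhere dense, $H_E^0(\F)=0$, so the hypothesis $H_E^1(\Om_M^1(\log E)(-E))=0$ makes the restriction $\Gamma(M,\F)\to\Gamma(M\setminus E,\F)$ an isomorphism. Feeding this back into the chain collapses all four groups and gives $\Gamma(X,\tOm_X^1)=\Gamma(X,\omega_X^1)$. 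Because $X$ is Stein, this passes to a sheaf equality: the cokernel of $\tOm_X^1\hookrightarrow\omega_X^1$ is coherent and supported on $X_\sing$, and by Cartan's Theorems~A and~B it has vanishing global sections, hence vanishes. Thus $\tOm_X^1=\omega_X^1$ and we conclude as in the first paragraph.

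I expect the main obstacle to lie not in the cohomological bookkeeping but in correctly pinning $\omega_X^1$ between the two ends of the chain. The lower bound $\Gamma(M,\F)\subseteq\Gamma(X,\tOm_X^1)$ is immediate, but the upper bound $\Gamma(X,\omega_X^1)\subseteq\Gamma(M\setminus E,\F)$ rests on the precise description of Barlet's sheaf as $\debar$-closed principal value currents that restrict to honest holomorphic forms on $X_\reg$, and the good behaviour of $\Om_M^1(\log E)$ used throughout requires $E$ to be a normal crossing divisor. Verifying that these two descriptions match up exactly so that the sharp hypothesis $H_E^1(\Om_M^1(\log E)(-E))=0$ suffices is the crux of Proposition~\ref{pro:pinkham}, and hence of the corollary.
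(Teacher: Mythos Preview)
Your proposal is correct and follows essentially the same route as the paper: reduce to Theorem~\ref{main} via Proposition~\ref{pro:pinkham}, use the local cohomology exact sequence for $\F=\Om_M^1(\log E)(-E)$ together with $\F\subset\Om_M^1$ to identify $\Gamma(M,\Om_M^1)$ with $\Gamma(M\setminus E,\Om_M^1)$, and then pass from the global equality $\Gamma(X,\tOm_X^1)=\Gamma(X,\omega_X^1)$ to the sheaf equality using Steinness. The only cosmetic difference is in this last step: the paper applies Cartan~A to pick global generators of $\omega_X^1$ and observes each lies in $\tOm_X^1$, whereas you argue via Cartan~B (to get $H^1(X,\tOm_X^1)=0$, hence $\Gamma$ of the cokernel vanishes) and then Cartan~A (a coherent sheaf on a Stein space with no global sections is zero); both are standard and equivalent.
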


In Corollary \ref{cor:for_the_referee} below, we collect all cases discussed in this note where the local freeness of $\omega_X^1$ implies $X$ is smooth.

\medskip

Let us put this in the context of the Lipman-Zariski conjecture, \cite[Introduction]{Lipman}, 
which states that $X$ is smooth if and only if $\mathscr{T}_X$ is locally free. 
Lipman \cite[Theorem 1]{Lipman} showed that if $\mathscr{T}_X$ is locally free, then $X$ is at least normal.
The conjecture has been proved assuming a priori, \eg that $\text{codim}_X X_{\sing}\geq 3$ or that $X$ is a klt space, see
\cite[Section 1.6]{vSS}, \cite[Corollary]{Flenner}, \cite[Theorem 6.1]{GKKP}, and the references therein. 
On a normal space the Lipman-Zariski conjecture may be reformulated in terms of $\omega_X^1$.
Since the dual sheaf $\mathscr{T}_X^*$ and $\omega_X^1$ are reflexive, $\mathscr{T}_X^*\simeq \omega_X^1$ if $X$ is normal.
Therefore, since $\sT_X$ is reflexive (as the dual of the K\"ahler differentials), on a normal space $X$,  
the Lipman-Zariski conjecture is equivalent to the statement that
$X$ is smooth if and only if $\omega_X^1$ is locally free.
We conclude that the Lipman-Zariski conjecture holds for complex spaces satisfying the cohomology condition in Corollary \ref{cor:pinkham}, in particular, for surfaces with rational singularities (the latter was proved also by Kersken, see \cite[(3.3)]{Kersken}).

\bigskip

\noindent
{\bf Acknowledgment:} We would like to thank Jan Stevens for fruitful discussions and suggestions and the anonymous referee for valuable comments which helped to improve the paper.

\section{Proofs}
A crucial ingredient of the proof of Theorem \ref{main} is the following proposition which generalizes a result by the second author in \cite[Sect.~4]{Sera16}:

\begin{proposition}\label{normality}
If $\sE$ is a locally free sheaf of positive rank on a reduced complex space $X$ such that there exist a proper modification $\pi\colon Z\rightarrow X$ with $Z$ normal and a coherent analytic sheaf $\sF$ with $\pi_\ast \sF\iso \sE$, then $X$ is normal.
\end{proposition}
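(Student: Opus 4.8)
The plan is to verify that each local ring $\hol_{X,x}$ is integrally closed in its total quotient ring, which is precisely normality of $X$. The hypothesis enters through the fact that $\sE\iso\pi_\ast\sF$ is not merely an $\hol_X$-module but carries an action of the larger algebra $\pi_\ast\hol_Z$, and I would exploit the tension between this extra structure and local freeness.

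First I would identify $\pi_\ast\hol_Z$ with the normalization. Let $\nu\colon\hat X\to X$ be the normalization of $X$. Since $Z$ is normal and no irreducible component of $Z$ is mapped into $X_\sing$ (because $\pi$ is a modification), $\pi$ lifts uniquely to a proper modification $\tilde\pi\colon Z\to\hat X$ with $\nu\circ\tilde\pi=\pi$. On the normal space $\hat X$ the second Riemann extension theorem applies: a section of $\tilde\pi_\ast\hol_Z$ is holomorphic off a nowhere dense analytic set and locally bounded there by properness of $\tilde\pi$, hence extends holomorphically. Thus $\tilde\pi_\ast\hol_Z=\hol_{\hat X}$ and therefore
$$\pi_\ast\hol_Z=\nu_\ast\tilde\pi_\ast\hol_Z=\nu_\ast\hol_{\hat X}=:\sR ,$$
the coherent sheaf of $\hol_X$-algebras whose stalk $\sR_x$ is the integral closure of $A:=\hol_{X,x}$ in its total quotient ring $K$. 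Normality of $X$ at $x$ is exactly the equality $\sR_x=\hol_{X,x}$. Since $\sF$ is an $\hol_Z$-module, $\sE\iso\pi_\ast\sF$ is an $\sR$-module whose restricted action recovers the $\hol_X$-structure; so at each point the free $A$-module $M:=\sE_x$ of rank $r\geq1$ carries a compatible $R$-action, where $R:=\sR_x\supseteq A$.

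The crux is then the following purely local statement: a free $A$-module $M$ of positive rank that is simultaneously an $R$-module, compatibly with the $A$-structure, forces $R=A$. I would prove this by linearizing after extension of scalars to $K$. As $A$ is reduced, $K$ is a finite product of fields, and since $R$ is the integral closure of $A$ in $K$ one has $R\otimes_A K=K$. Tensoring the structure homomorphism $R\to\operatorname{End}_A(M)$ with $K$ yields a $K$-algebra homomorphism $K\to\operatorname{End}_K(M\otimes_A K)\iso M_r(K)$, which, being unital and $K$-linear, must be $c\mapsto c\cdot\mathrm{id}$. Hence the induced $R$-action on $K^r\iso M\otimes_A K$ is scalar multiplication through $R\aus K$, and under the standard inclusion $M=A^r\hookrightarrow K^r$ we get, for every $t\in R$, that $t\cdot e_1=(t,0,\dots,0)$ lies in $A^r$; therefore $t\in A$. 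Thus $R\aus A$; as $A\aus R$ always, $R=A$, and $X$ is normal at $x$. As $x$ is arbitrary, $X$ is normal.

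I expect the main obstacle to be the first step, namely pinning down $\pi_\ast\hol_Z$ as the normalization algebra: this requires Grauert's direct image theorem for coherence, the lifting to $\hat X$, and the passage from properness to the boundedness needed for Riemann extension. The local lemma is short once set up, but two points need care: the compatibility of the module structure under the base change to $K$ (so that the extended action really is the scalar one), and the reducible case, where $K$ is a product of fields rather than a single field. Finally, positivity of the rank is essential --- for $r=0$ the module $M$ is zero and imposes no constraint on $R$ --- which is why the hypothesis that $\sE$ has positive rank cannot be dropped.
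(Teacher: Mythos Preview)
Your argument is correct and self-contained, but it follows a genuinely different line from the paper's own proof. The paper argues in two stages: it first shows directly that $X$ is locally irreducible --- assuming a reducible germ at $x$, the preimages in the normal space $Z$ of the local components are pairwise disjoint, and one manufactures a section of $\sF$ supported on a single preimage whose image under $\Phi^{-1}$ would have to be simultaneously $e_1$ and $0$ on a connected set --- and then invokes \cite[Theorem~4.6]{Sera16} to upgrade local irreducibility to normality. Your approach instead identifies $\pi_\ast\hol_Z$ with the normalization algebra $\nu_\ast\hol_{\hat X}$ and exploits that the free $\hol_{X,x}$-module $\sE_x$ carries a compatible module structure over the integral closure $R$; the base-change to the total quotient ring $K$ forces the $R$-action to be scalar, and evaluating on a basis vector pins $R$ inside $A$. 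This is a clean algebraic shortcut that handles irreducibility and normality in one stroke and avoids the external reference, at the cost of the small detour through $\pi_\ast\hol_Z=\nu_\ast\hol_{\hat X}$. One terminological quibble: the extension you use on $\hat X$ (locally bounded holomorphic off a thin analytic set extends on a normal space) is the \emph{first} Riemann extension theorem for normal spaces, not the second; the substance of your step is unaffected.
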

\begin{proof}
We only need to prove that $X$ is locally irreducible. That $X$ is normal then follows from \cite[Theorem 4.6]{Sera16}.

Let $x\in X_\sing$ and assume to get a contradiction that $X$ is reducible at $x$.
Then, there is a connected neighbourhood $U$ of $x$ and a decomposition of $U$ in irreducible components $U_1{,}...,U_m$, $m\geq 2$. Since $Z$ is normal, the preimages $V_i=\pi^{-1}(U_i)$, $i=1{,}...,m$ are pairwise disjoint. Set $V:=\pi^{-1}(U)$.
We may assume that $\hol_U^r\iso\sE_U\iso \pi_\ast\sF_V$.
Let $\Phi\colon \hol_U^r\rightarrow \pi_\ast\sF_V$ be the composition of these isomorphisms.
Consider $e_1:=(1\ 0\ ...\ 0)^T\in\hol^r(U)$ and $g:=\Phi(e_1)\in \pi_\ast\sF(U)=\sF(V)$.
We denote the restriction of $g$ to $V_i$ by $\tilde g_i\in \sF(V_i)$.
Let $g_i\in \sF(V)$ be the trivial extension of $\tilde g_i$, which is well defined since all $V_i$ and $V\minus V_i$ are disjoint, open and closed. Then, $g=\sum g_i$ and, in particular, $e_1=\sum \Phi^{-1}(g_i)$.
We obtain that $\Phi^{-1}(g_1)$ is $e_1$ on $U_1\minus X_\sing$ and $0$ on the other irreducible components.
This contradicts the fact that $\Phi^{-1}(g_1)\in\hol^r(U)$ is a holomorphic function on the connected $U$ 
completing the proof.
\end{proof}

When $X$ is normal, a coherent analytic sheaf
$\sS$ is reflexive if and only if $\sS$ is torsion-free and sections of $\sS$ extend across analytic sets of codimension at least 2.
In particular, every section of a reflexive sheaf on the regular part of $X$ extends across the singular set.

\begin{proposition}\label{pro:general_GKKP}
Let $X$ be a normal complex space,  and $\sE$ be a coherent analytic subsheaf of $\tOm_X^1$ such that $\sE_{X_\reg}\iso \Om_{X_\reg}^1$.
If $\sE$ is locally free, then $X$ is smooth.
\end{proposition}

\begin{proof}
We get $\sT_{X_\reg} \iso (\tOm_{X_\reg}^1)^\ast\iso\sE_{X_\reg}^\ast$.
Since $\sT_X$ is reflexive, this isomorphism extends across the singular set and so, $\sT_X\iso\sE^\ast$ is locally free. 
\medskip

With $\sT_X$ and $\sE$ locally free and $\sE\subset\tOm_X^1$, we obtain the smoothness of $X$ in the same way as in the proof of Theorem 6.1 in \cite{GKKP}:

To get a contradiction we assume that $X$ is not smooth. Let $x\in X$ be a singular point of $X$. By shrinking $X$ to a small neighbourhood of $x$, we may assume that $\sT_X$ and $\sE$ are free.
There exists a resolution of singularities, $\rho_X\colon R(X)\rightarrow X$ with simple normal crossing exceptional divisor.
We may assume that $\rho$ is functorial with respect to smooth morphisms (flat submersions%
\footnote{For pure dimensional complex spaces $X$ and $Y$, a holomorphic morphism $f\colon X\rightarrow Y$ is a submersion if the relative $\Om_{X|Y}^1$ is locally free of rank $\dim X-\dim Y$ (which is the case if and only if $f$ can be seen locally as a projection).}%
), see, \eg Theorem 3.45 in \cite{Kollar07}. The functorial property means that if $f\colon X\rightarrow Y$ is a smooth morphism, then we can lift $f$ to the resolutions, \ie there exists a smooth morphism $R(f)\colon R(X)\rightarrow R(Y)$ such that $f\circ \rho_X = \rho_Y\circ R(f)$.
Let $E$ denote the exceptional divisor of $\rho:=\rho_X$ in $R:=R(X)$ which is not empty (since $X$ is not smooth).
Since $\sT_X$ is free, there is a frame of sections $\theta_1{,}...,\theta_n$ which generates $\sT_X$ on $X$.
Since the singular set of $X$ is invariant under any (local) automorphism and $\rho$ is functorial, Corollary 4.7 in \cite{Greb-Kebekus-Kovacs} gives us
	\[\sT_X \iso \rho_\ast \sT_{R}(-\log E)\]
where $\sT_{R}(-\log E)$ denotes the logarithmic tangent sheaf (\ie vector fields which are tangent to $E$ in smooth points of $E$; $\sT_{R}(-\log E)$ is the dual of the logarithmic differential forms $\Om^1_{R}(\log E)$).
Hence, (we are in the special situation that) we may lift the vector fields $\theta_i$ to logarithmic vector fields
	\begin{equation}\label{eq:lifts}\tilde\theta_i\in H^0(R,\sT_{R}(-\log E))\aus H^0({R},\sT_{R}).\end{equation}
Since $\sE\iso\sT_X^\ast$, there is a dual frame $\omega_1{,}...,\omega_n\in \sE\subset\tOm^1_X$ of $\theta_1{,}...,\theta_n$.
Since $\tOm^1_X=\rho_\ast\Om^1_{R}$, $\omega_i$ is given by holomorphic forms $\tilde\omega_1{,}...,\tilde\omega_n\in H^0({R},\Om^1_{R})$ on ${R}$.
On ${R}\minus E$, we have $\tilde\omega_i(\tilde\theta_j)=\omega_i(\theta_j)=\delta_{ij}$.
These equalities extend to $E$ by continuity.
In particular, $\tilde\theta_1(p){,}...,\tilde\theta_n(p)$ are linear independent for every point $p\in E$.
We obtain a contradiction since all $\tilde\theta_i(p)$ are in $T_{p}E$ (because of \eqref{eq:lifts}) with $\dim T_{p}E=n-1$ for $p\in E_\reg$ 
This completes the proof. 
\end{proof}

\begin{proof}[Proof of  Theorem \ref{main}]
As already mentioned, we of course only need to prove that local freeness of $\tOm_X$ or $\alpha_X^1$ implies $X$ is smooth:
Since $\tOm_X^1:=\pi_\ast\Om_M^1$ and $\alpha:=\pi_\ast(\pi^{\ast\ast}\Om_X^1)$,  the assumption implies $X$ is normal by Proposition \ref{normality}. The smoothness of $X$ follows now by Proposition \ref{pro:general_GKKP}.
\end{proof}

\bigskip

Corollary \ref{cor:pinkham} follows from Theorem \ref{main} and the following proposition which is a generalization of a result of Pinkham (see \cite[Proposition~1, Appendix]{Pinkham}).

\begin{proposition}\label{pro:pinkham}
Let $X$ be a reduced Stein space, $\pi\colon M\rightarrow X$ a resolution of singularities with $E$ as exceptional divisor having normal crossings. If  
	\begin{equation}\label{eq:pinkham-cond}
	H_E^1(\Om_M^1(\log E)\otimes\sO(-E))=0,\end{equation}
then
	\[\tOm^1_X= \omega^1_X.\]
\end{proposition}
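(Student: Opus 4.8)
The plan is to prove the non-trivial inclusion $\omega^1_X\aus\tOm^1_X$, the reverse inclusion $\tOm^1_X\aus\omega^1_X$ being automatic. Since $\omega^1_X$ restricts to $\Om^1_{X_\reg}$ on the regular part and is torsion free, it embeds into the direct image $j_\ast\Om^1_{X_\reg}$, where $j\colon X_\reg\hookrightarrow X$; thus we have a chain of subsheaves $\tOm^1_X\aus\omega^1_X\aus j_\ast\Om^1_{X_\reg}$. As $X$ is Stein I would carry out the comparison on global sections and recover the sheaf statement at the end from Cartan's theorems applied to the (coherent) cokernel of $\tOm^1_X\hookrightarrow\omega^1_X$. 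Using that $\pi$ is an isomorphism over $X_\reg$, so that $M\minus E\iso X_\reg$, the identifications $\Gamma(X,\tOm^1_X)=H^0(M,\Om^1_M)$ and $\Gamma(X,j_\ast\Om^1_{X_\reg})=H^0(M\minus E,\Om^1_M)$ reduce everything to a single assertion on $M$: every holomorphic $1$-form on $M\minus E$ extends holomorphically across $E$. Once this is shown the three spaces of global sections coincide, whence $\tOm^1_X=\omega^1_X$ --- in fact the argument shows that every holomorphic $1$-form on $X_\reg$ is weakly holomorphic, with $\omega^1_X$ squeezed in between.

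To produce the extension I would use the short exact sequence on $M$
\[ 0\longrightarrow \Om^1_M(-E+\log E)\longrightarrow \Om^1_M\longrightarrow \mathcal Q\longrightarrow 0, \]
where the first map is the natural inclusion of the twisted logarithmic forms into $\Om^1_M$ --- locally, where $E=\{z_1\cdots z_k=0\}$, a generator $z_1\cdots z_k\cdot(dz_i/z_i)=z_1\cdots\widehat{z_i}\cdots z_k\,dz_i$ is genuinely holomorphic --- and where the cokernel $\mathcal Q$ is supported on $E$. The crucial point is that this inclusion restricts to an \emph{isomorphism} over $M\minus E$, since there $z_1\cdots z_k$ is invertible. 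Passing to the long exact sequences of local cohomology along $E$ and using the naturality square, the connecting homomorphism $\delta\colon H^0(M\minus E,\Om^1_M)\to H^1_E(M,\Om^1_M)$ factors through $H^1_E\big(M,\Om^1_M(-E+\log E)\big)$ via that isomorphism. Since $H^0_E(M,\Om^1_M)=0$ (the sheaf being locally free), exactness shows that a form on $M\minus E$ extends across $E$ precisely when its class under $\delta$ vanishes; and the hypothesis \eqref{eq:pinkham-cond}, namely $H^1_E(M,\Om^1_M(-E+\log E))=0$, forces $\delta$ to be identically zero. Hence every holomorphic $1$-form on $M\minus E$ extends, completing the reduction of the first paragraph.

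The step I expect to be the main obstacle is the homological comparison in the second paragraph, that is, organising the two local-cohomology long exact sequences so that \eqref{eq:pinkham-cond} indeed annihilates the extension obstruction. Two points require care. First, one must verify at the level of germs that $\Om^1_M(-E+\log E)$ sits inside $\Om^1_M$ as the asserted subsheaf with cokernel concentrated on $E$; this is the normal-crossing computation, and it is where the precise twist $-E+\log E$ is pinned down. Second, and more seriously, one must pass correctly from Pinkham's situation --- an isolated surface singularity, where $E$ is compact and the obstruction lives in an ordinary group $H^1$ --- to the present Stein, normal-crossing and possibly non-isolated setting, where the natural receptacle is the local cohomology $H^1_E$. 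Here the Steinness of $X$ and the properness of $\pi$ are exactly what make the connecting maps behave and what allow one to descend the conclusion on $M$ back to the sheaf identity $\tOm^1_X=\omega^1_X$ on $X$.
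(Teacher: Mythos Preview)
Your proposal is correct and follows essentially the same two-step strategy as the paper: first use the local-cohomology vanishing \eqref{eq:pinkham-cond} together with the identification $\Om^1_M(-E+\log E)|_{M\setminus E}\cong\Om^1_M|_{M\setminus E}$ to show that every holomorphic $1$-form on $M\setminus E$ extends across $E$, then descend this global statement to the sheaf equality on $X$ via Steinness. The only cosmetic differences are that the paper runs the single local-cohomology sequence for $\Om^1_M(-E+\log E)$ and then sandwiches, whereas you phrase it via the naturality square between two sequences; and for the descent the paper uses Cartan~A to pick global generators of $\omega^1_X$ and show each lies in $\tOm^1_X$, while you argue that the coherent cokernel has no global sections and is therefore zero---both are standard and equivalent.
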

\medskip

Thereby, $\Om^1_M(\log\! E)$ denotes the logarithmic differential forms.
Pinkham proved that
	\begin{equation}\label{eq:more_general_pinkham}
	H_E^1(\Om_M(\log E))=0=H^0(M,\Om_M(\log E)\otimes\hol_E)
	\end{equation}
implies \eqref{eq:pinkham-cond}.
Additionally, J.~Wahl proved that if $X$ is a surface with rational singularities, then \eqref{eq:pinkham-cond} is satisfied, too (see \cite[Théorème (Wahl)]{Pinkham}, using \cite[\S\-, 4--5]{Wahl}).

The condition \eqref{eq:pinkham-cond} does not imply that $\alpha_X^1\iso \omega_X^1$: By \cite[Section 6.2]{Barlet18}, we have
	\[\Om_{S_k}^1 /\tor=\alpha_{S_k}^1\subsetneq \tOm_{S_k}^1=\omega_{S_k}^1\]
for the surface $S_k:=\{(x,y,z)\colon xy=z^k\}$ in $\CC^3$. Yet, $S_k$ has rational singularities so that \eqref{eq:pinkham-cond} is satisfied for a resolution of singularities of $S_k$ 
(see \cite[Théorème (Wahl)]{Pinkham}).

\newcommand{\Ma}{{M^\ast}}
\begin{proof}[Proof of Proposition \ref{pro:pinkham}] 
To shorten the notation, let us define $\Om^1_M(-E+\log\! E):=\Om^1_M(\log\! E)\otimes\hol(-E)$.

As first step, we will prove that the inclusion $H^0(M,\Om^1_M) \hookrightarrow H^0(\Ma, \Om^1_M)$ induced by the restriction $\cdot|_{\Ma}$ is surjective with $\Ma:=M\minus E$:
The short exact sequence
	\[0\rightarrow \Om^1_M(-E+\log\!E)|_E\rightarrow \Om^1_M(-E+\log\!E) \rightarrow \Om^1_\Ma(-E+\log\!E) \rightarrow 0\]
gives us a long exact sequence of cohomology whose first $H^1$-term vanishes by our assumption \eqref{eq:pinkham-cond}.
Hence, the restriction map
	\[H^0(M,\Om^1_M(-E+\log\!E)) \rightarrow H^0(\Ma,\Om^1_M(-E+\log\!E))\]
 is surjective. Since it is already an inclusion of sets, we get
	\begin{equation}\label{eq:pinkham}H^0(M,\Om^1_M(-E+\log\!E))=H^0(\Ma,\Om^1_M(-E+\log\!E))=H^0(\Ma,\Om^1_M).\end{equation}
Let us consider a point $p$ in $M$ and coordinates $x_1{,}...,x_n$ of $M$ on a neighbourhood of $p$ such that $E=\{x_1{\cdot}...{\cdot}x_\tau=0\}$. Then $\spfrac{dx_1}{x_1}{,}...,\spfrac{dx_\tau}{x_\tau}$ and $dx_{\tau+1}{,}...,dx_n$ give us a basis of $\Om_M^1(\log E)_p$.
Furthermore, $\{x_1{\cdot}..{\,}\hat x_i\, ..{\cdot}x_\tau {dx_i}\}_{ i=1}^{\tau}\cup\{x_1{\cdot}...{\cdot}x_\tau dx_j\}_{j={\tau+1}}^{n}$ is a basis for $\Om^1_M(-E+\log\!E)_p$, in particular $\Om^1_M(-E+\log\!E)_p \aus \Om^1_{M,p}$.
We obtain the following chain of inclusions, which then have to be equalities:
	\[H^0(\Ma,\Om^1_M) \overset{\eqref{eq:pinkham}}= H^0(M,\Om^1_M(-E+\log\!E)) \subset H^0(M,\Om^1_M) \subset  H^0(\Ma,\Om^1_M).\]

\medskip

As second step, we want to use the global extension property proven in the first step to obtain the local extension:
Since $\omega^1_X$ is coherent and $X$ is Stein, Cartan's Theorem A implies that there exists an epimorphism of sheaves
	\[\hol^N_X\twoheadrightarrow \omega^1_X, (f_{i,p})_{i=1}^N\mapsto \sum{\!\Big.}_{i}\, f_{i,p}\cdot \ph_{i,p}\]
(on the whole $X$) given by sections $\ph_1{,}...,\ph_N\in\omega^1_X(X)\aus H^0(\Ma,\Om^1_M)$.
In the first step, we proved that $\ph_i$ can be extended to sections in $H^0(M,\Om^1_M)=H^0(X,\tOm^1_X)$.
If $\eta_p$ is a germ in $\omega^1_{X,p}$, then there exist holomorphic germs $f_{1,p}{,}...,f_{N,p}\in\hol_{X,p}$ such that $\eta_p=\sum_{i=1}^p f_{i,p} \ph_{i,p}$. Since $\ph_{i,p}\in\tOm^1_{X,p}$, we get that $\eta_p$ can be considered as a germ of $\tOm^1_{X,p}$ 
as claimed.
\end{proof}

If $\codim X_\sing$ is at least $2$, then $\omega^1_X =i_\ast \Om^1_{X_\reg}$ \cite[end of Section 2]{Barlet} (where $i\colon X_\reg \hookrightarrow X$). This is not so for $\codim X_\sing=1$  in general.
However, we obtain the following corollary of Proposition \ref{pro:pinkham} by slight modifications of the proof.
\begin{corollary}\label{cor:pinkham_extended}
Let $X$ be a reduced complex space, $\pi\colon M\rightarrow X$ a resolution of singularities with $E$ as exceptional divisor having normal crossings.\\
\stdpt{a} If $X$ is Stein, $i_\ast \Om^1_{X_\reg}$ is coherent, and 
	\begin{equation*}
	H_E^1(\Om_M^1(\log E)\otimes\sO(-E))=0,
	 \hbox{\hspace{.5em} or}\end{equation*}
\stdpt{b} if for all small enough open sets $U\subset X$,
	\begin{equation*}
	H_E^1(\Om_{\pi^{-1}(U)}^1(\log E)\otimes\sO(-E))=0,
	\end{equation*}
then
	\begin{equation}\label{eq:canonical-sheaves-coincide}\tOm^1_X= \omega^1_X =i_\ast \Om^1_{X_\reg}.\end{equation}
\end{corollary}
\begin{proof}
We follow the same arguments as in the proof of  Proposition \ref{pro:pinkham}.
In case the conditions of (a) are satisfied, we obtain in a first step the global extension property:
	\[H^0(X,\tOm^1_X) =H^0(M,\Om^1_M) = H^0(\Ma, \Om^1_M)=H^0(X_\reg, \Om^1_{X_\reg}).\]
Replacing $\omega_X^1$  by $i_\ast \Om^1_{X_\reg}$ in the second step of the proof above, we get the local extension property \eqref{eq:canonical-sheaves-coincide}.

\medskip

Only using the first step of the proof of Proposition \ref{pro:pinkham}, we obtain that the condition $(b)$ implies that the inclusion $H^0(\pi^{-1}(U),\Om^1_{\pi^{-1}(U)}) \hookrightarrow H^0(\pi^{-1}(U)\minus E, \Om^1_{\pi^{-1}(U)})$ is surjective for all small enough open sets $U\subset X$, \ie $\tOm^1_X(U)= i_\ast \Om^1_{X_\reg}(U)$.
This implies that the inclusions $\tOm^1_X\subset \omega^1_X \subset i_\ast \Om^1_{X_\reg}$ are equalities 
as claimed.
\end{proof}

\medskip

We conclude this work with collecting cases  where local freeness of $\omega_X^1$ implies smoothness of $X$. We get the following corollary of Theorem \ref{main} and the above mentioned results.

\begin{corollary}\label{cor:for_the_referee}
Let $X$ be a reduced complex space such that one of the following conditions is satisfied:\\
\stdpt{i} $X$ is normal and $\codim X_\sing\geq 3$. \\
\stdpt{ii} $X$ is normal and has klt singularities.\\
\stdpt{iii} $X$ is a surface with rational singularities, or more general\\
\stdpt{iv} $X$ is Stein and satisfies \eqref{eq:more_general_pinkham}, or more general\\
\stdpt{v} $X$ satisfies the assumptions in Corollary \ref{pro:pinkham}, or\\
\stdpt{vi} $X$ satisfies the conditions (a) or (b) of Corollary \ref{cor:pinkham_extended}.\\
Then $X$ is smooth if $\omega_X^1$ is locally free (or $\sT_X$ is locally free).
\end{corollary}

\bigskip

\providecommand{\bysame}{\leavevmode\hbox to3em{\hrulefill}\thinspace}
 
\end{document}